\theoremstyle{plain}
\newtheorem{thm}[subsection]{Theorem}
\newtheorem{lem}[subsection]{Lemma}
\newtheorem{prop}[subsection]{Proposition}
\newtheorem{cor}[subsection]{Corollary}
\theoremstyle{definition}
\newtheorem{rk}[subsection]{Remark}
\newtheorem{ex}[subsection]{Example}
\numberwithin{equation}{section}
\newcommand{\A}{{\mathcal A}}
\newcommand{\LL}{{\mathcal L}}
\newcommand{\Z}{\mathbb{Z}}
\newcommand{\C}{\mathbb{C}}
\newcommand{\PP}{\mathbb{P}}
\begin{document}

\title [On the monodromy of Milnor fibers of hyperplane arrangements]
{On the monodromy of Milnor fibers of hyperplane arrangements }

\author[Pauline Bailet]{Pauline Bailet}
\address{Univ. Nice Sophia Antipolis, CNRS,  LJAD, UMR 7351, 06100 Nice, France.}
\email{Pauline.BAILET@unice.fr}

\subjclass[2000]{Primary  32S22, 32S55; Secondary  32S25, 32S40}

\keywords{hyperplane arrangements, Milnor fiber, monodromy, local systems }

\begin{abstract} 
We describe a general setting where the monodromy action on the first cohomology group of the Milnor fiber of a hyperplane arrangement is the identity.

\end{abstract}

\maketitle

\section{Introduction}

Let $\A =\{H_1,\,...\,,H_d\} \subset \C^{n+1}$ be a central arrangement of $d$ hyperplanes, with Milnor Fiber $F_{\A},\,$ and intersection lattice $L(\A).$ For any edge $X \in L(\A),\,$ we note $\A_X:=\{ H \in \A\,\,|\,\, X \subset H\}$ the corresponding subarrangement.
We associate to $\A$ the projective arrangement $\A' \subset \PP_{\C}^{n}$ obtained by associating to a   hyperplane $H \in \A$, given by $\ell_H=0$, the hyperplane $H' \in \PP_{\C}^{n}$ 
defined by the same equation $\ell_H=0$.
We note $M(\A)$ and $M(\A')$ the complements of $\A$ and $\A'.$ \\
Consider the Orlik-Solomon algebra $A_R^*(\A)$ of $\A$ with coefficients in a unitary commutative ring $R,\,$ and the corresponding Aomoto complex:
$(A_R^*(\A),\omega_1 \wedge)$,
where $\omega_1= {\sum_{H \in \A} a_H} \in A_{R}^1(\A).$ Here $a_H \in A_{R}^1(\A)$ denotes the element of $A_{R}^1(\A)$ corresponding to the hyperplane $H$, see \cite{OT}.

\medskip

Let $\,\lambda=exp(2\sqrt{-1} \pi /d).$
For $q\geq 0,$ we denote by $H^q(F_{\A})_{\lambda^k}$ the  $\lambda^k$- eigenspace of the monodromy operator $h^q: H^q(F_{\A},\C) \to H^q(F_{\A},\C),$ for $0 \leq k \leq d-1.$
There is a  well known relation between these eigenspaces and the cohomology of $M(\A')$ with coefficients in a rank one local system \cite{CS}, \cite{ST}:  
\begin{center}
$H^q(F_{\A})_{\lambda^k}= H^q(M(\A'),\LL_{\lambda^k}),$
\end{center} 
where $\LL_{\lambda^k}$ is the rank one local system on $M(\A')$ whose monodromy around  any hyperplane of $\A'$ is $\lambda^k.$
The main result of this note, Theorem \ref{pg1} below,  is a vanishing result describing many situations where $H^q(F_{\A})_{\lambda^k}=0$ for $k \ne 0.$\\

Let us begin by introducing a new  combinatorial object associated to a hyperplane arrangement $\A$, namely a graph $G(\A)$ given by:
 \begin{itemize}
 \item The vertices of $G(\A)$ correspond to the hyperplanes of $\A.$
 \item Two different vertices $H_1$ and $H_2$ are linked by an edge (we will note $H_1-H_2$) if and only if $\A_X= \{H_1,H_2\},\,$ where $X=H_1 \cap H_2.$ 
 \end{itemize}
We say that such a graph is connected if for any two vertices $H_1$ and $H_2,$ we can find an edge sequence linking $H_1$ and $H_2.$ 

Note that in hyperplane arrangement theory it is a classical idea to associate an arrangement to a graph (to obtain a graphic arrangement). However, it seems that the converse construction of a graph from an arrangement is rather unexplored.

With the previous notation we have the following main result.

\begin{thm} \label{pg1} 
Suppose the following assumptions are verified.

\medskip

\noindent (i) The graph $G(\A)$ is connected.

\noindent (ii) For every codimension $2$ intersection $X$ of hyperplanes in $\A$, we have 
$|\A_X| \leq 9.$

\noindent (iii) We have either
$6 \nmid d,$ 
or there exists an hyperplane $H \in \A$ such that if $X$ is an intersection of hyperplanes of $\A$ of codimension $2,\,\,X \subset H,\,$ then $|\A_X| \neq 6.$

\medskip

Then $H^1(F_\A,\C)=H^1(F_\A)_1$, i.e. $H^1(F_{\A})_{\lambda^k}=0$ for $k \ne 0$.
\end{thm}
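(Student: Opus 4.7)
The plan is to transfer the question on monodromy eigenspaces to one on twisted cohomology of $M(\A')$, apply a modular bound of Papadima--Suciu type, and use the graph-theoretic hypothesis (i) to kill the resulting Aomoto--Betti numbers. The eigenvalues of order $6$ escape this scheme and are the reason for the separate hypothesis (iii).

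\medskip

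By the Cohen--Suciu identification recalled above, $H^1(F_\A)_{\lambda^k} = H^1(M(\A'),\LL_{\lambda^k})$, so it suffices to show vanishing of the right-hand side for $1 \leq k \leq d-1$. Let $e$ denote the order of $\lambda^k$. A standard local monodromy computation forces $H^1(M(\A'),\LL_{\lambda^k}) = 0$ unless $e$ divides $|\A_X|$ for some codimension $2$ flat $X$, since otherwise the character $\LL_{\lambda^k}$ has nontrivial monodromy around every such $X$. Combined with (ii), this restricts $e$ to $\{2,3,4,5,6,7,8,9\}$.

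\medskip

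For each prime power $e = p^s$ in this list, I would invoke a modular bound of Papadima--Suciu type
\[
\dim_{\C} H^1(F_\A)_{\lambda^k} \leq \beta_p(\A),
\]
where $\beta_p(\A) = \dim_{\mathbb{F}_p} H^1(A^*_{\mathbb{F}_p}(\A), \omega_1 \wedge)$. Using the decomposition $A^2(\A) = \bigoplus_X A^2(\A_X)$ over codimension $2$ flats, a direct inspection shows that for $\eta = \sum_H c_H a_H \in A^1_{\mathbb{F}_p}$ the condition $\omega_1 \wedge \eta = 0$ amounts, flat by flat, to the following: $c_H$ is constant on $\A_X$ when $p \nmid |\A_X|$, and $\sum_{H \in \A_X} c_H \equiv 0 \pmod{p}$ when $p \mid |\A_X|$. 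In both alternatives, every edge $H_1 - H_2$ of $G(\A)$ (where $|\A_X| = 2$) forces $c_{H_1} = c_{H_2}$. Connectivity of $G(\A)$ then propagates this equality to all of $\A$, so $\eta = c\,\omega_1$ is a coboundary and $\beta_p(\A) = 0$ for every prime $p$. This handles the prime-power orders $e \in \{2,3,4,5,7,8,9\}$.

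\medskip

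The one remaining order is $e = 6$, which is not a prime power, so the modular bound above does not apply directly. This is precisely where (iii) enters: if $6 \nmid d$, no $\lambda^k$ has order $6$ and there is nothing to prove; otherwise, the witness hyperplane $H \in \A$ with no codim $2$ flat $X \subset H$ satisfying $|\A_X| = 6$ should allow a deletion/restriction argument along $H$ (combined with the fact that $\LL_{\lambda^k}$ has trivial monodromy around no flat of multiplicity $6$ on $H$) to reduce the order-$6$ eigenspace to contributions that are already known to vanish at orders $2$ and $3$. Settling this order-$6$ case cleanly is the principal technical hurdle, and is where I expect most of the real work to lie.
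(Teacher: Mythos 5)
Your overall strategy coincides with the paper's: the Cohen--Suciu identification, a nonresonance vanishing theorem to restrict the possible orders $e=\mathrm{ord}(\lambda^k)$ to $\{2,\dots,9\}$, and the Papadima--Suciu modular bound combined with connectivity of $G(\A)$ to kill the prime-power orders. Your computation that $\beta_p(\A)=0$ for every prime $p$ is exactly the content of the paper's Lemma \ref{lem1} (including the correct observation that the case $p=2$, where the edge condition reads $c_{H_1}+c_{H_2}=0$, still yields equality in $\mathbb{F}_2$); the paper merely adds the small bookkeeping steps (Lemma \ref{lem2} and Remark \ref{rk1}, using $p\mid d$) needed to pass between the Aomoto complexes of $\A$ and of the projective arrangement $\A'$ to which Theorem C of \cite{PS} literally applies.

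The genuine gap is the order-$6$ case, which you explicitly leave open and propose to attack by deletion/restriction; no such argument is needed, and none is given in the paper. The point you are missing is that the nonresonance vanishing theorem (Remark 2.4.20 of \cite{ST}, going back to \cite{CDO}) does not require nontrivial monodromy around \emph{every} dense edge of $\A'$: it suffices that $T_X\neq 1$ for all dense edges $X$ contained in a \emph{single} fixed hyperplane $H'$. This is exactly what hypothesis (iii) supplies. If $6\mid d$, take the witness hyperplane $H$; every dense codimension-$2$ edge $X\subset H'$ has $|\A'_X|\in\{1,\dots,9\}\setminus\{6\}$, and since $6$ divides none of these numbers, a character of order $6$ has $T_X=\lambda^{k|\A'_X|}\neq 1$ for all such $X$ (and $T_{H'}=\lambda^k\neq 1$). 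Hence $H^1(F_\A)_{\lambda^k}=0$ directly from the localized vanishing theorem --- the order-$6$ eigenspace is disposed of by the same tool you already invoked in your second step, applied along $H'$ rather than globally, and the "principal technical hurdle" you anticipate does not arise. Your global formulation ("unless $e$ divides $|\A_X|$ for \emph{some} codimension $2$ flat $X$") is too weak to see this, because when $6\mid d$ the arrangement may well have flats of multiplicity $6$ away from $H$.
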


\begin{rk} 
(i) We have that 
$$H^1(F_\A)_1 = H^1(M(\A'),\LL_{\lambda^0})=H^1(M(\A'),\C) = \C^{d-1}.$$
Thus, Theorem \ref{pg1} gives large number of situations where  $H^1(F_\A,\C)$ is determined by the intersection lattice $L(\A).$
Indeed, the graph $G(\A)$ is constructed with the information given by $L(\A)$.
In general, the question whether the cohomology of $F_{\A}$ is determined by $L(\A)$ is still open, even if many advances have been made (see for instance  the results of A. Macinic and S. Papadima \cite{PM} for the first Betti number of graphic arrangements, as well as the results of M. Yoshinaga on real line arrangements \cite{Y1}, \cite{Y2} ).
\medskip

(ii) By taking a generic $3$-dimensional subspace $E\subset \C^{n+1} $ and replacing $\A'$ by the corresponding line arrangement in $\PP(E)=\PP^2$, we can consider from the beginning that $n=2$. This follows from the Zariski Theorem of Lefschetz type due to Hamm, Hamm-L\^e and Goreski-MacPherson, see for instance for the simplest version \cite{Di}, p.25. Moreover, in the case of a line arrangement, the action of $h^1$ determines all the actions $h^*$ in view of the usual formula for the zeta-function of the monodromy of the Milnor fiber of a homogeneous polynomial, see for instance \cite{Di}, p.107.\\

(iii) The  case where every distinct lines $H,\,H'$ of $\A$ are linked by an edge corresponds to the case where $\A$ is generic and then Theorem \ref{pg1} follows from  Theorem 3.2 in \cite{CS}.

\end{rk}

The proof of Theorem \ref{pg1} uses a deep result of S. Papadima and A. Suciu  \cite{PS} on resonance varieties with coefficients in a finite field  and a vanishing result of D. Cohen, A. Dimca and P. Orlik \cite{CDO}, obtained via perverse sheaves as explained in detail in \cite{ST} .
As a corollary, we apply this Theorem \ref{pg1} to the braid arrangement to recover S. Settepanella and A. M\u{a}cinic and S. Papadima results in this particular case.\\
We would like to thank A. M\u{a}cinic and G. Dehnam for some useful suggestions to improve the first version of this paper. Special thanks are due to A. Dimca for his help.

\section{Demonstration of Theorem \ref{pg1}}

The first result explains the role played by the graph $G(\A)$ in this story.

\begin{lem}\label{lem1}
Suppose the graph $G(\A)$ is connected. Then $H^1(A^*_R(\A), \omega_1 \wedge) =0\,\,$ for any unitary commutative ring $R.$
\end{lem}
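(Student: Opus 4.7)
\medskip

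\noindent\textbf{Proof plan.} Since $A_R^0(\A)=R$ and $\omega_1\wedge\colon A_R^0\to A_R^1$ sends $r\mapsto r\,\omega_1$, the image in $A_R^1(\A)$ is the free rank-one submodule $R\cdot\omega_1$. Hence $H^1(A_R^*(\A),\omega_1\wedge)=0$ is equivalent to the statement that every cocycle $\eta\in A_R^1(\A)$ is of the form $c\,\omega_1$ for some $c\in R$. Writing a general element as
\[
\eta=\sum_{H\in\A} c_H\, a_H,\qquad c_H\in R,
\]
the plan is to show that the hypothesis $\omega_1\wedge\eta=0$ forces $c_H=c_{H'}$ whenever $H-H'$ is an edge of $G(\A)$, and then to invoke connectivity of $G(\A)$.

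The key computation is
\[
\omega_1\wedge\eta=\sum_{\{H,H'\},\,H\ne H'}(c_{H'}-c_H)\, a_H\wedge a_{H'},
\]
after orienting each unordered pair. I would then recall the standard structure of the Orlik–Solomon ideal in low degree: in degree $2$ it is generated by the elements
\[
\partial(a_{H_1}\wedge a_{H_2}\wedge a_{H_3})=a_{H_2}a_{H_3}-a_{H_1}a_{H_3}+a_{H_1}a_{H_2}
\]
attached to triples $\{H_1,H_2,H_3\}\subset\A_X$ for some codimension-$2$ flat $X$ (equivalently, this gives the natural decomposition $A_R^2(\A)=\bigoplus_{X\in L_2(\A)}A_R^2(\A_X)$, valid over any unitary commutative $R$). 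The crucial observation is that every such relation involves only pairs of hyperplanes sharing a common codimension-$2$ flat of multiplicity at least $3$. Consequently, if $H-H'$ is an edge of $G(\A)$, i.e.\ $\A_{H\cap H'}=\{H,H'\}$, then $a_H\wedge a_{H'}$ is not touched by any OS relation and generates a free rank-one direct summand of $A_R^2(\A)$.

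Setting the coefficient of $a_H\wedge a_{H'}$ in $\omega_1\wedge\eta$ equal to zero then yields $c_{H'}=c_H$ for every edge of $G(\A)$. Connectivity of $G(\A)$ (hypothesis of the lemma) propagates this equality to all of $\A$, so all the $c_H$ coincide with a single $c\in R$, whence $\eta=c\,\omega_1\in\mathrm{im}(\omega_1\wedge\colon A_R^0\to A_R^1)$. This completes the argument, so that $H^1(A_R^*(\A),\omega_1\wedge)=0$. The only slightly delicate point is the claim that $a_H\wedge a_{H'}$ survives as a free generator when $\A_{H\cap H'}=\{H,H'\}$, and this is immediate from the description of the OS relations in degree $2$ (or from the NBC basis at the flat $H\cap H'$, which contains the single element $a_H\wedge a_{H'}$); no other machinery is required.
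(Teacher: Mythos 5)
Your proposal is correct and follows essentially the same route as the paper: both arguments reduce to showing $\ker(\omega_1\wedge\colon A_R^1\to A_R^2)=R\cdot\omega_1$, use the degree-two decomposition $A_R^2(\A)=\bigoplus_{X\in L_2(\A)}A_R^2(\A_X)$ (Brieskorn) to see that an edge $H-H'$ of $G(\A)$ forces the coefficient of the free generator $a_H\wedge a_{H'}$ to vanish, hence $c_H=c_{H'}$, and then propagate by connectivity. Your remark that $a_H\wedge a_{H'}$ is untouched by the Orlik--Solomon relations when $\A_{H\cap H'}=\{H,H'\}$ is exactly the content of the paper's use of the summand $A_R^2(\A_X)$ for such $X$, so there is nothing to add.
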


\proof

We show that $ker\{\xymatrix{
A_R^1  \ar[r]^{\omega_1 \wedge} & A_R^{2} 
}\}= R. \omega_1.$
Let $b={\sum_{H \in \A}b_H a_H} $ be an element of $ A^1_R(\A).$ For all $X \in L(\A)$ we note ${\omega_1}_{X}={\sum_{H| X\subset H}a_H},\,$ and $b_X={\sum_{H| X\subset H}b_H a_H}.$\\
We have that $A_R^2(\A)= \displaystyle{\bigoplus_{X \in L_2(\A)} {A^2_R(\A_X)}},$ with $L_2(\A)=\{X\in L(\A)\,|\,codim\,X =2\}\,$ by Brieskorn decomposition theorem \cite{OT}. Hence we have:
\begin{center}
$\omega_1 \wedge b =0_R\,\, \Leftrightarrow\,\, {\omega_1}_{X} \wedge b_X = 0_R \,\,\,\forall X \in L_2(\A).$
\end{center}
So, suppose $\omega_1 \wedge b=0_R\,$ and let us show that $b_H= b_{H'}\,\,\forall H \neq H' \in \A.$\\
Let $H,\,H'$ be two distinct hyperplanes of $\A.$ Then $X= H \cap H' \in L_2(\A)$ and we have either:
\begin{itemize}
\item $H$ and $H'$ are linked with an edge and $\A_X=\{H,H'\}.$ In this case we have ${\omega_1}_{X} \wedge b_X = 0_R \Rightarrow (a_H+a_{H'})\wedge (b_Ha_H+b_{H'}a_{H'})=0_R \Rightarrow b_H=b_{H'}.$
\item Or $H$ and $H'$ are not directely linked by an edge but there exists $H_{i_1},\,...\,,H_{i_m}$ hyperplanes of $\A$ such that $H$ and $H_{i_1},\,\,H_{i_1}$ and $H_{i_2},\,...\,,H_{i_m}$ and $H'$ are linked. With the same considerations than in the first case we have that $b_H=b_{H_{i_1}},\,\,b_{H_{i_1}}=b_{H_{i_2}},\,...\,,b_{H_{i_m}}=b_{H'},\,$ so
$b_H=b_{H'}.$
\end{itemize}
Hence $b$ and $\omega_1$ are proportionnal and $H^1(A^*_R, \omega_1 \wedge) =0.$\\

\endproof
A shorter proof can be obtained using Lemma 3.3 in \cite{Lib-Yuz}, which says that if $\omega_1\wedge b=0,$ then for every $X=H \cap H' \in L_2(\A)$ such that $H$ and $H'$ are linked with an edge, we have that $b_H-b_{H'}=0.$ Then we can conclude with the connectivity of the graph (in this paper the result is stated only for a field $R$ of characteristic 0, but it is easy to see that it holds in general). For $R=\Z_p=\Z/p\Z,\,p\,$ prime, we can use  Lemma 4.9 of \cite{PM}, which is a generalization of Lemma 3.3 of \cite{Lib-Yuz} for finite fields, and we have that if $\omega_1 \wedge b=0,$ then for every $X=H \cap H' \in L_2(\A)$ such that $H$ and $H'$ are linked with an edge, we have: $b_H+b_{H'}=0\,$ if $p=2,\,$ and $b_H=b_{H'}$ if $p\neq 2.$ Hence we always have  $b_H=b_{H'} ,$ and we conclude with the connectivity of the graph.\\

\begin{rk}\label{rkomega}
In fact, a more general version of Lemma \ref{lem1} holds. If $\omega= \sum_{H \in \A} \omega_H a_H$ satisfies $\omega_H \neq 0_R \, \forall H \in \A,$ we can show that if $G(\A)$ is connected, then we have $H^1(A^*_R(\A), \omega \wedge) =0\,\,$ for any unitary commutative ring $R.$ Indeed, suppose $\omega \wedge b=0_R.$ If $H$ and $H'$ are linked by an edge, then $(\omega_H a_H + \omega_{H'} a_{H'})\wedge (b_H a_H + b_{H'} a_{H'}) = 0_R \\\Rightarrow\,\, \omega_H b_{H'} - \omega_{H'}b_{H}=0 \,\,\Rightarrow\,\, \exists t \in R\,\mid \, \left \{
\begin{array}{rcl}
b_H & = & t \omega_H \\
b_{H'} & = & t \omega_{H'}
\end{array}
\right.$.
If there exists $H_{i_1},\,...\,,H_{i_m}$ hyperplanes of $\A$ such that $H$ and $H_{i_1},\,\,H_{i_1}$ and $H_{i_2},\,...\,,H_{i_m}$ and $H'$ are linked, with the same considerations we have that there exists $t,t_1,\,...\,,t_m$ scalars in $R$ such that:
$\left \{
\begin{array}{rcl}
b_H & = & t \omega_H \\
b_{H_{i_1}} & = & t \omega_{H_{i_1}}
\end{array}
\right.$,
$\left \{\begin{array}{rcl}
b_{H_{i_1}} & = & t_1 \omega_{H_{i_1}} \\
b_{H_{i_2}} & = & t_1 \omega_{H_{i_2}} 
\end{array}
\right.$ $,\,...\,
,\left \{\begin{array}{rcl}
b_{H_{i_m}} & = & t_m \omega_{H_{i_m}} \\
b_{H'} & = & t_m \omega_{H'} 
\end{array}
\right.$. By identification we find that $t=t_1=\,...\,=t_m.$ Hence $b$ and $\omega$ are proportionnal. Furthermore, because $\A$ is central, we have that $H^*(M(\A),\Z)$ is torsion free. Now, let $u\in \C^*$ be a primitive root of the unity of order $p^s,\,p$ prime. Let $(k_H)_{H \in \A}$ be a collection of integers with g.c.d equal to 1 and such that $k_H \neq 0_{\Z_p}\,\,\forall\,H\in \A,$ where $\Z_p=\Z/p\Z.$ Let $\rho: \pi_1(M(\A))\rightarrow \C^*$ be the representation such that $\rho(\gamma_H)=u^{k_H},\,\,$ where $\gamma_H$ is the meridian around the hyperplane $H,$ and let $\LL$ be the associated rank one local system on $M(\A).$ Let us take $\omega= \sum_{H\in \A} [k_H]a_H \in A_{\Z_p}^1(\A),\,$ where $[k_H]$ is the mod $p$ reduction of $k_H.$ If $G(\A)$ is connected, then $H^1(A^*_{\Z_p}(\A), \omega \wedge) =0.$ Finally with Theorem 6.2 of \cite{PM} or Theorem C of \cite{PS} we have that $H^1(M(\A),\LL)=0.$ In fact it is generalization to the case $k_H=1\,\,\forall\,\,H\in \A,\,$ and $\omega=\omega_1$ that we will use in the proof of Theorem \ref{pg1}.  
\end{rk}

The second result is rather general, and we include it here for reader's sake, as we were not able to find a proper reference.

\begin{lem}\label{lem2}
Let  $\omega_1 \in A^*_R(\A)$ be as above and assume that $\partial \omega_1=d=0$ in $R$.
Then $H^1(A^*_R(\A), \omega_1 \wedge)=H^1(A^*_R(\A'), \omega_1 \wedge).$
\end{lem}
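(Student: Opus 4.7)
The plan is to exploit the classical direct sum decomposition of the Orlik-Solomon algebra of a central arrangement along a chosen hyperplane, and to use the hypothesis $d=0$ in $R$ to place $\omega_1$ inside the subalgebra attached to $\A'$. Fix any $H_0\in\A$ and set $b_H:=a_H-a_{H_0}$ for $H\neq H_0$. The subalgebra of $A^*_R(\A)$ generated by the $b_H$ is canonically isomorphic to $A^*_R(\A')$ (the Orlik-Solomon algebra of the decone of $\A$ along $H_0$, cf.\ \cite{OT}), and one has the standard splitting of graded $R$-modules
\[
A^*_R(\A)=A^*_R(\A')\ \oplus\ a_{H_0}\cdot A^*_R(\A').
\]

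The first step is to expand $\omega_1$ in this splitting:
\[
\omega_1=\sum_{H\in\A}a_H=d\cdot a_{H_0}+\sum_{H\neq H_0}b_H=d\cdot a_{H_0}+\omega_1',
\]
where $\omega_1':=\sum_{H\neq H_0}b_H\in A^1_R(\A')$. Under the hypothesis $d=0$ in $R$, this forces $\omega_1=\omega_1'\in A^*_R(\A')$. Multiplication by $\omega_1$ then preserves the subalgebra $A^*_R(\A')$, while on the other summand graded commutativity gives $\omega_1\wedge(a_{H_0}\cdot x)=-a_{H_0}\cdot(\omega_1\wedge x)$. Hence the Aomoto complex splits as a direct sum of two subcomplexes: $(A^*_R(\A'),\omega_1\wedge)$, and a second subcomplex $(a_{H_0}\cdot A^*_R(\A'),\omega_1\wedge)$ which, via $a_{H_0}\cdot x\leftrightarrow x$, has the same cohomology as $(A^*_R(\A'),\omega_1\wedge)$ placed one degree higher. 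Taking $H^1$ therefore gives
\[
H^1(A^*_R(\A),\omega_1\wedge)=H^1(A^*_R(\A'),\omega_1\wedge)\ \oplus\ H^0(A^*_R(\A'),\omega_1\wedge).
\]

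The last step is to kill the extra summand. Since $A^1_R(\A')$ is a free $R$-module with basis $\{b_H\}_{H\neq H_0}$ and $\omega_1'=\sum_{H\neq H_0}b_H$ has coefficient $1$ on each basis element, the map $R\to A^1_R(\A')$, $r\mapsto r\omega_1'$, is injective over any unitary commutative ring $R$. Therefore $H^0(A^*_R(\A'),\omega_1\wedge)=0$ and the equality of the lemma follows. The only nontrivial ingredient is the subalgebra decomposition above, which is a standard feature of Orlik-Solomon algebras of central arrangements; everything else is routine bookkeeping.
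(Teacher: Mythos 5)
Your proof is correct, but it follows a genuinely different route from the paper's. The paper identifies $A^*_R(\A')$ with $\ker\partial\subset A^*_R(\A)$ and uses the derivation property of $\partial$: if $\omega_1\wedge b=0$ for $b\in A^1_R(\A)$, then $0=\partial(\omega_1\wedge b)=(\partial\omega_1)b-\omega_1\,\partial b=d\cdot b-(\partial b)\,\omega_1=-(\partial b)\,\omega_1$, and since $\omega_1$ has coefficient $1$ on each basis element $a_H$ of the free module $A^1_R(\A)$, this forces $\partial b=0$, i.e.\ $b\in A^1_R(\A')$; thus the kernel of $\omega_1\wedge$ in degree one already lives in the subcomplex, and both inclusions of cohomology groups follow in two lines. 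You instead invoke the cone splitting $A^*_R(\A)=A^*_R(\A')\oplus a_{H_0}\cdot A^*_R(\A')$, observe that $d=0$ places $\omega_1$ in the subalgebra so that the Aomoto complex splits into the subcomplex and a degree-shifted copy of it, and then kill the extra summand $H^0(A^*_R(\A'),\omega_1\wedge)$ by the same unit-coefficient observation. Both arguments are valid over an arbitrary unitary commutative ring; all the facts you use are standard (the directness of the splitting follows from the contracting homotopy $x=\partial(a_{H_0}x)+a_{H_0}\partial x$, and the identity $\partial(a_{H_0}a_{H_1}\cdots a_{H_q})=(a_{H_1}-a_{H_0})\cdots(a_{H_q}-a_{H_0})$ shows your subalgebra generated by the $b_H$ coincides with $\ker\partial$, the model of $A^*_R(\A')$ used in the paper). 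What your approach buys is the stronger and more symmetric statement $H^q(A^*_R(\A),\omega_1\wedge)\cong H^q(A^*_R(\A'),\omega_1\wedge)\oplus H^{q-1}(A^*_R(\A'),\omega_1\wedge)$ in every degree $q$, the familiar cone formula for Aomoto complexes; what the paper's argument buys is brevity and the fact that it needs no structural input beyond $\partial$ being a degree $-1$ derivation.
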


\proof
We have that $A^*_R(\A')=
 ker\{\xymatrix{
A^*_R(\A)  \ar[r]^{\partial} & A^*_R(\A) 
}\} \subset A^*_R(\A),\,$
so it is clear that $H^1(A^*_R(\A'), \omega_1 \wedge) \subset H^1(A^*_R(\A), \omega_1 \wedge).$ Now let $b \in ker\{\xymatrix{
A^1_R(\A)  \ar[r]^{\omega_1 \wedge} & A^2_R(\A) 
} \}.$ We have that $\partial(\omega_1 \wedge b) = d.b- \omega_1 .\partial b = -\omega_1 .\partial b =0 \,\,\Rightarrow\,\, \partial b =0$ in $R.$\\ Hence $b \in ker\{\xymatrix{
A^1_R(\A)  \ar[r]^{\partial} & A^0_R(\A) 
}\}= A^1_R(\A'),\,$ and we have that\\ $ ker\{\xymatrix{
A^1_R(\A)  \ar[r]^{\omega_1 \wedge} & A^2_R(\A) 
} \}\subset  ker\{\xymatrix{
A^1_R(\A')  \ar[r]^{\omega_1 \wedge} & A^2_R(\A') 
} \},\,$ and \\$H^1(A^*_R(\A), \omega_1 \wedge)\subset H^1(A^*_R(\A'), \omega_1 \wedge).$
\endproof

\begin{rk}\label{rk1}
If we take $H'_d$ the hyperplane at infty, we can define $A^*_R(\A')$ as the Orlik-Solomon algebra of the affine arrangement $\A'=\{H_1',\,...\,,H'_{d-1}\}\subset \C^n.$\\ Let $\omega_1'= \sum_{i=1}^{d-1}a'_{H_i} \in A^1_R(\A'),\,$ where $a'_{H_i}\in A^1_R(\A')$ denotes the element of $A^1_R(\A')$ corresponding to the hyperplane $H'_i.$ Then we have in fact that $a'_{H_i}= a_{H_i}-a_{H_d}$\\$\forall 1\leq i \leq d-1.$ So if  $R$ is finite field of characteristic $p,\,\,p$ prime, $\,p \mid d,\,$ then we have  $\omega_1'= \sum_{i=1}^{d-1}(a_{H_i} - a_{H_d})= \sum_{i=1}^{d-1}a_{H_i} - (d-1) a_{H_d}= \sum_{i=1}^{d}a_{H_i}= \omega_1.$ 
Hence $H^1(A^*_R(\A'), \omega'_1 \wedge)=H^1(A^*_R(\A'), \omega_1 \wedge).$
\end{rk}
Now we give the proof of  Theorem \ref{pg1}. For this we consider several cases.
\begin{enumerate}
\item If $6 |d,\,$ there exists $H \in \A$ such that:
\begin{center}
if $X\in L_2(\A),\,\,X \subset H,\,$ then $|\A_X| \neq 6.$
\end{center}
So the associated projective hyperplane $H'\in \A'$ is such that:
\begin{center}
if $X\in L_2(\A'),\,\,X \subset H',\,$ then $|\A'_X| \neq 6.$
\end{center}
Let $\lambda^k \ne 1$ be a $d$-th root of the unity.
The only edge of $\A'$ contained in $H'$ of codimension 1 is $H',$ and the corresponding monodromy operator of $\LL_{\lambda^k}$ is $T_{H'}= \lambda^k \neq 1$.
Let $X\in L(\A')$ be a dense edge of $\A'$ of codimension $2$ contained in $H'$. Then  the corresponding monodromy operator of  $\LL_{\lambda^k}$ about the divisor associated to $X$  is $$\,T_X= \lambda ^{k |\A'_X|},\,$$
 with $|\A'_X| \in \{1,2,3,4,5,7,8,9\}$, see \cite{ST}.
By using the vanishing result of Remark 2.4.20 of \cite{ST} applied to $\LL_{\lambda^k},$ we have that:
\begin{center}
$ord(\lambda^k) \notin \{1,2,3,4,5,7,8,9\}\Rightarrow H^1(F_\A)_{\lambda^k}=0.$
\end{center}
On the other hand, Theorem C of \cite{PS}, with our Lemmas \ref{lem1} and \ref{lem2} and Remark \ref{rk1} for a finite field $R= \Z_p=\Z/ p\Z$, show that if $ord(\lambda^k)=p^s$ with $p$ prime and $s\geq1,$ i.e. if
$ord(\lambda^k) \in \{2,3,4,5,7,8,9\}$, then again $ H^1(F_\A)_{\lambda^k}=0.$
Hence we have $H^1(F_\A,\C)= H^1(F_\A)_1$ in all the above subcases.

\item Suppose $6 \nmid d,$ and
let $X\in L(\A')$ be a dense edge of $\A'$ of codimension 2. With the same considerations as in the first subcase above, i.e. using  Remark 2.4.20 of \cite{ST},  we have that:
\begin{center}
$ord(\lambda^k) \notin \{1,2,3,4,5,6,7,8,9\}\Rightarrow H^1(F_\A)_{\lambda^k}=0.$
\end{center}
Because $ord(\lambda^k)| d,\,$ we have that $ord(\lambda^k)\neq 6,\,$ so $ord(\lambda^k) \in \{2,3,4,5,7,8,9\}$,
and we conclude as in the previous case, using Theorem C of \cite{PS}, and our Lemma \ref{lem1} .
\end{enumerate}

\section{Application}

We now apply Theorem \ref{pg1} to the braid arrangement $\A_n\subset \C^{n+1}$ with Milnor fiber $F_n.$ Recall that $\A_n$ is the collection of the hyperplanes 
\begin{center}
$H_{ij}:\,x_i-x_j=0,\,\,1\leq i<j\leq n+1.$
\end{center}

\begin{cor}\label{expg2}
We have that $H^1(F_{n},\C)=H^1(F_n)_1$ for any $ n \geq4.$
\end{cor}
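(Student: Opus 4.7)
The plan is to verify that for $n \geq 4$ the braid arrangement $\A_n$ satisfies all three hypotheses of Theorem \ref{pg1}, and then just quote that theorem.

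First I would inspect the codimension $2$ edges of $\A_n$. Any such edge $X$ is of one of two kinds. Either $X = H_{ij} \cap H_{kl}$ with $\{i,j\} \cap \{k,l\} = \emptyset$, in which case $\A_X = \{H_{ij}, H_{kl}\}$ and $|\A_X| = 2$; or $X$ is the triple-coincidence locus $\{x_i = x_j = x_k\}$ for three distinct indices, in which case $\A_X = \{H_{ij}, H_{ik}, H_{jk}\}$ and $|\A_X| = 3$. Thus $|\A_X| \in \{2,3\}$ for every codimension $2$ edge. This already settles hypothesis (ii) ($|\A_X| \leq 9$), and it also settles hypothesis (iii) for free: since $|\A_X|$ is never equal to $6$, any hyperplane $H \in \A_n$ satisfies the condition, regardless of whether $6 \mid d = \binom{n+1}{2}$.

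Second, I would establish hypothesis (i), i.e.\ that $G(\A_n)$ is connected. From the analysis above, two hyperplanes $H_{ij}$ and $H_{kl}$ are joined by an edge in $G(\A_n)$ precisely when $\{i,j\} \cap \{k,l\} = \emptyset$. Given any pair $H_{ij}$, $H_{kl}$, if their index sets are disjoint they are already linked directly; otherwise, because $n \geq 4$ we have at least $n+1 \geq 5$ indices available, and we can pick $p, q \notin \{i,j,k,l\}$ and use the length-two path $H_{ij} - H_{pq} - H_{kl}$. This proves connectedness.

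With hypotheses (i), (ii), (iii) in hand, Theorem \ref{pg1} delivers $H^1(F_n, \C) = H^1(F_n)_1$, finishing the proof.

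The argument is essentially bookkeeping, and there is no serious obstacle; the only place where the assumption $n \geq 4$ genuinely enters is the connectivity step, where one needs a fifth index to bridge two hyperplanes sharing an index. It is worth observing (as a sanity check) that for $n = 3$ the graph $G(\A_3)$ breaks into the three disjoint edges $H_{12}-H_{34}$, $H_{13}-H_{24}$, $H_{14}-H_{23}$, so hypothesis (i) fails; this is consistent with the classical fact that the rank-three braid arrangement does carry a non-trivial eigenspace $H^1(F_3)_{\lambda^k}$ for $k \neq 0$.
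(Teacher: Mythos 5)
Your proposal is correct and follows essentially the same route as the paper: classify the codimension $2$ edges of $\A_n$ into the two types with $|\A_X|\in\{2,3\}$ (which gives hypotheses (ii) and (iii) immediately), and prove connectivity of $G(\A_n)$ by bridging any two hyperplanes sharing an index through a third hyperplane $H_{pq}$ with $p,q$ outside their union of indices, which is exactly where $n\geq 4$ is used. The paper's proof is the same argument, merely spelling out the three sub-cases $j=k$, $j=l$, $i=k$ separately.
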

\proof

Let us show that $G(\A_n)$ is connected for $n\geq 4.$

There are two types of  intersections $X \in L_2(\A_n)$ of codimension 2:
\begin{enumerate}
\item Type 1:\\
The intersections  $X = \{\, x_i=x_j,\,x_k=x_l,\,\,\,\, 1\leq i<j<k<l\leq n+1\,\,\},$\\
with corresponding subarrangement $\A_X=\{H_{ij},H_{kl}\}.$
\item Type 2:\\
The intersections $X = \{\, x_i=x_j=x_k,\,\,\,\, 1\leq i<j<k\leq n+1\,\},\,$ \\ with corresponding subarrangement $\A_X=\{H_{ij},H_{ik},H_{jk}\}.$
\end{enumerate}
Let $H_{ij}\,,\,H_{kl}\,,\,\,\,i<j\,,\, k<l\,\,$ be two distinct hyperplanes, \\and  $X=H_{ij}\cap\,H_{kl} \in L_2(\A_n).$ Suppose $i\leq k.$\\

\begin{itemize}

\item If $\{i,j\}\cap \{k,l\}= \emptyset,\,$ then $X$ is type 1 and $\A_X=\{H_{ij}\,,\,H_{kl}\}.$\\
Hence $H_{ij}$ and $H_{kl}$ are linked by an edge.\\

\item If $\{i,j\} \cap \{k,l\}\neq \emptyset$ then three cases are possible:
\vspace{0.2cm} 
\begin{enumerate}[(a)]
\item If $j=k,$ then the set $I=\{i,j,k,l\}$ has three elements.
Because $n \geq 4,$ the set $\{1,2,\,...\,,n+1\}$ has at least five elements and so contains two elements  $p<q$  such that $I \cap \{p,q\}= \emptyset.$
Hence $H_{ij} \cap H_{pq}$ and $H_{jl} \cap H_{pq}$ are two type 1 intersections and we have that  $H_{ij}$ and $H_{pq},\,$  and $H_{pq}$ and $H_{jl}$ are linked.
\item If $j=l,$ with the same considerations we have that there exists two element $p<q$ such that $H_{ij}$ and $H_{pq},\,$  and  $H_{pq}$ and $H_{kj}$ are linked.
\item If $i=k,$ with the same considerations we have that there exists two element $p<q$ such that $H_{ij}$ and  $H_{pq},\,$  and $H_{pq}$ and $H_{il}$ are linked.

\end{enumerate}
\end{itemize}
\vspace{0.2cm}
Finally $G(\A_n)$ is connected for $n\geq 4.$\\
Moreover, it is clear that $\A_n$ verifies the assumptions of  Theorem \ref{pg1} because $|{\A_n}_X|= 2$ or $3\,\,\,\,\forall X\in L_2(\A_n),\,$ and we have:
\begin{center}
$H^1(F_{\A_n},\C)=H^1(F_n)_1\,\,\,\forall n \geq4.$
\end{center}

\endproof
\begin{rk}\label{rkcor}
For $n=3,\,$ the graph $G(\A_3)$ has three connected components, so is not connected. It is known that
$H^1(F_3,\C)_{\ne 1}= H^1(F_{3})_{\lambda^2} \oplus H^1(F_{3})_{\lambda^4} $ is $2$-dimensional, \cite{PM}, \cite{S1}.
Similarly, for $n=2$, the graph $G(\A_2)$ has three connected components and $H^1(F_2,\C)_{\ne 1}= H^1(F_{2})_{\lambda} \oplus H^1(F_{2})_{\lambda^2} $,  is again $2$-dimensional, \cite{PM}, \cite{S1}.
For the Ceva arrangement given by
$$(x^3-y^3)(y^3-z^3)(x^3-z^3)=0,$$
 the graph $G(\A)$ has $9$ connected components (there are no edges in this case). It is known that
$H^1(F,\C)_{\ne 1}= H^1(F)_{\lambda^2} \oplus H^1(F)_{\lambda^4} $ is $4$-dimensional, see for instance \cite{BDS}.

Moreover, note that if $\A'$ is a line arrangement in $\PP^2$ coming from a pencil having $k\geq 3$ completely reducible fibers, see \cite{FY} , then the corresponding graph $G(\A)$ has at least $k$ connected components
\end{rk}

\begin{cor}\label{cor2} Assume that $\A'$ is a line arrangement in $\PP^2$ having only double and triple points. Assume that either

\noindent(i) the graph $G(\A)$ is connected, or

\noindent(ii) there is one line containing exactly one triple point and $d$ is even.\\
Then  $H^1(F_{\A},\C)=H^1(F_{\A})_1$.
\end{cor}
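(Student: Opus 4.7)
My plan is to deduce both cases from Theorem \ref{pg1}. Since $\A'$ has only double and triple points, every codimension-$2$ flat $X\in L_2(\A)$ corresponds to an intersection point of $\A'$ and satisfies $|\A_X|\in\{2,3\}$. Consequently hypothesis (ii) of Theorem \ref{pg1} is automatic (certainly $|\A_X|\le 9$), and hypothesis (iii) holds trivially as no $|\A_X|$ equals $6$, so any hyperplane $H\in\A$ witnesses the second alternative in (iii). Thus only hypothesis (i), connectedness of $G(\A)$, needs to be verified.

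For case (i) this is assumed, so nothing further is required. For case (ii), I would let $L_0$ denote the given line with unique triple point $P$, and let $L_1,L_2$ be the other two lines of $\A'$ through $P$. Any line $L\ne L_0$ not passing through $P$ meets $L_0$ at a double point (since $P$ is the only triple point of $L_0$), and is therefore joined to $L_0$ by an edge of $G(\A)$. So everything outside $\{L_1,L_2\}$ is already connected to $L_0$, and the remaining task is to connect $L_1$ and $L_2$ to $L_0$.

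Here is where the parity of $d$ enters, and it is the only nontrivial step. Each of the $d-1$ lines other than $L_1$ meets $L_1$ at either a double or a triple point, and each triple point on $L_1$ absorbs exactly two of those lines. If $L_1$ carries $t$ triple points, the number of double points of $\A'$ lying on $L_1$ is $d-1-2t$, which is odd and hence at least $1$ since $d$ is even. Pick any line $L$ meeting $L_1$ at such a double point: $L$ cannot be $L_0$ or $L_2$ (both meet $L_1$ at the triple point $P$), so in particular $L$ does not pass through $P$ and by the previous paragraph $L$ is linked to $L_0$. This yields a path $L_1-L-L_0$ in $G(\A)$, and the same argument with $L_2$ in place of $L_1$ connects $L_2$ to $L_0$. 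Hence $G(\A)$ is connected, and Theorem \ref{pg1} applies to give $H^1(F_\A,\C)=H^1(F_\A)_1$. The only piece of real content is the parity observation forcing a double point on each of $L_1,L_2$; the rest is assembling hypotheses.
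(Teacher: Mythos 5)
Your proposal is correct and follows essentially the same route as the paper: reduce everything to verifying connectedness of $G(\A)$, note that every line off the triple point $P$ is linked to $L_0$ by an edge, and use the parity of $d$ to show that $L_1$ and $L_2$ each carry a double point (the paper phrases this as: if $L_1$ had only triple points then $d=2(d-1)/2+1$ would be odd). Your count $d-1-2t$ being odd is exactly the paper's contradiction argument run forwards.
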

\proof
The case (i) follows directly from Theorem \ref{pg1}.\\
(ii) Let $H \in \A$ be the line containing exactly one triple point $p.$ Let $H_1,\,H_2 \in \A$ such that $\A_p=\{H,H_1,H_2\}.$ Every $H_i \notin \A_p$ is linked by an edge with $H$ and we have that $G(\A)$ is not connected if and only if $H_1$ or $H_2$ contains only triple points.
For example, if $H_1$ would contain only triple points, we could count the hyperplanes of $\A$ in the following manner: $H_1,(H_{i_1},H_{j_1}),\,(H_{i_2},H_{j_2}),\,...\,,(H_{i_{(d-1)/2}},H_{j_{(d-1)/2}}),\,$ where the pairs $(H_i,H_j)$ correspond to the points of multiplicity 3 contained in $H_1.$ Finally it would imply that $d = 2 .(d-1)/2 +1$ is odd, which is in contradiction with our assumptions. Hence $G(\A)$ is connected and we conclude directly with Theorem \ref{pg1}.
 
\endproof

Corollary \ref{cor2} is a direct consequence of Theorem \ref{pg1}. The next result (with a proof similar to the proof of Theorem \ref{pg1}) is more general, and can be obtained also as a consequence of Theorem 1.2 of \cite{Lib} saying that  if $H^1(F_{\A},\C) \neq H^1(F_{\A})_1,$ then $\A'$ comes from a pencil so $G(\A)$ is not connected. Indeed, for such a pencil, (ii) of Proposition \ref{proplib} is not verified except for $d=3.$ 

\begin{prop}\label{proplib}
Assume that $\A'$ is a line arrangement in $\PP^2$ having only double and triple points. Assume that either

\noindent(i) the graph $G(\A)$ is connected, or

\noindent(ii) $d=|\A')|>3$ and there is one line containing exactly one triple point.\\
Then  $H^1(F_{\A},\C)=H^1(F_{\A})_1$.
\end{prop}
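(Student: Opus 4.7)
The plan is to mimic the proof of Theorem \ref{pg1}, supplementing it with a direct mod-$3$ Aomoto computation to cover the case where $G(\A)$ fails to be connected. Case (i) of the proposition is immediate from Theorem \ref{pg1}. For case (ii), I would first run the combinatorial argument of Corollary \ref{cor2}: let $p = H \cap H_1 \cap H_2$ be the unique triple point on the distinguished line $H$; every line outside $\A_p$ meets $H$ at a double point, hence is edge-linked to $H$, so the component $C$ of $G(\A)$ containing $H$ contains all of $\A \setminus \{H_1,H_2\}$. If $G(\A)$ is connected, case (i) applies; otherwise the same counting as in Corollary \ref{cor2} forces $d$ to be odd (and hence $d \geq 5$ by the hypothesis $d > 3$), with, say, $H_1$ isolated and all $(d-1)/2$ intersections of $H_1$ with other lines being triple points.

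In this remaining situation I would apply the scheme of Theorem \ref{pg1}: conditions (ii) and (iii) of that theorem hold automatically (since $|\A_X| \leq 3$ and in particular $|\A_X| \ne 6$), so Remark 2.4.20 of \cite{ST} applied to the line $H$ (whose dense edges of codimension $\leq 2$ have $|\A'_X| \in \{1,3\}$) gives $H^1(F_\A)_{\lambda^k}=0$ whenever $\mathrm{ord}(\lambda^k) \notin \{1,2,3\}$. Because $d$ is odd, $\mathrm{ord}(\lambda^k) \ne 2$, so only $\mathrm{ord}(\lambda^k)=3$ remains. For that case I would invoke Theorem C of \cite{PS}, together with Lemma \ref{lem2} and Remark \ref{rk1}, to reduce the required vanishing to the single statement $H^1(A^*_{\Z_3}(\A), \omega_1 \wedge)=0$.

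This last step is the main obstacle: since $G(\A)$ is no longer connected, Lemma \ref{lem1} is unavailable and the mod-$3$ Aomoto vanishing has to be checked by hand. I would write $b = \sum_H b_H a_H$ with $\omega_1 \wedge b=0$; Lemma 4.9 of \cite{PM} (applied with $p=3 \ne 2$) forces $b_H$ to be constant, equal to some $t \in \Z_3$, on the big component $C = \A \setminus \{H_1,H_2\}$, while a direct computation in the Orlik-Solomon algebra of a triple point $\{H'_1,H'_2,H'_3\}$ shows that $\omega_1 \wedge b=0$ contributes the mod-$3$ relation $b_{H'_1}+b_{H'_2}+b_{H'_3} \equiv 0 \pmod 3$. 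Since $d \geq 5$, there is at least one triple point on $H_1$ other than $p$, and such a point necessarily involves two lines of $C$ (because $H_1 \cap H = H_1 \cap H_2 = p$ rules out further triple points on $H_1$ containing $H$ or $H_2$); the corresponding relation forces $b_{H_1}+2t \equiv 0 \pmod 3$, i.e.\ $b_{H_1}=t$. The same argument applied to $H_2$, irrespective of whether it is isolated, gives $b_{H_2}=t$; the relation at $p$ is then automatic, $b = t \cdot \omega_1$, and the proof is complete.
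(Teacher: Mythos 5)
Your proposal is correct and follows essentially the same route as the paper: Remark 2.4.20 of \cite{ST} applied to the distinguished line disposes of $ord(\lambda^k)\neq 3$, and the case $ord(\lambda^k)=3$ is reduced via Theorem C of \cite{PS}, Lemma \ref{lem2} and Remark \ref{rk1} to the vanishing $H^1(A^*_{\Z_3}(\A),\omega_1\wedge)=0$, which is then checked by hand using the triple-point relation $b_{H'_1}+b_{H'_2}+b_{H'_3}=0$ in $\Z_3$. The only (harmless) difference is organizational: the paper establishes $b_{H_1}=b_{H_2}=b_H$ by a uniform three-case analysis on the multiplicities of $H_1\cap H_k$ and $H_2\cap H_k$ for a line $H_k\notin\A_p$, without first reducing to the disconnected situation or invoking the parity of $d$ to exclude $ord(\lambda^k)=2$ (which is unnecessary, since double points are not dense edges).
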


\begin{proof}The case (i) follows directly from Theorem \ref{pg1}.\\
(ii) Let $H' \in \A'$ be the line containing exactly one triple point $p.$ Let $H'_1,\,H'_2 \in \A'$ such that $\A'_p=\{H',H'_1,H'_2\}.$
Let $\lambda^k \ne 1$ be a $d$-th root of the unity.
The only edge of $\A'$ of codimension 1 contained in $H'$  is $H',$ and the corresponding monodromy operator of $\LL_{\lambda^k}$ is $T_{H'}= \lambda^k \neq 1$. The dense edges of codimension 2  contained in $H'$  are the points of multiplicity 3 in $H'.$ So let $X \subset H'$ be such 
a dense edge. Then  the corresponding monodromy operator of  $\LL_{\lambda^k}$ about the divisor associated to $X$  is $\,T_X= \lambda ^{3k}.$
By using the vanishing result of Remark 2.4.20 of \cite{ST} applied to $\LL_{\lambda^k},$ we have that $ord(\lambda^k) \neq 3 \Rightarrow H^1(F_\A)_{\lambda^k}=0.$\\
Now let us show that $H^1(A^*_{\Z_3}(\A), \omega_1 \wedge)=0.$\\
 Let $b= \sum_{H \in \A} b_H a_H \in ker\{\xymatrix{
A_{\mathbb{\Z}_3}^1  \ar[r]^{\omega_1 \wedge} & A_{\mathbb{\Z}_3}^{2} 
}\}.$ 
If $H_k\in \A \backslash \A_p,\,$ then  $X=H \cap H_k$ is such that $\A_X=\{H,H_k\}$ and with the same considerations as in the proof of Lemma \ref{lem1} we have ${\omega_1}_{X} \wedge b_{X}=0 \Leftrightarrow b_{H_k}= b_H.$
We will show that $b_{H_1} = b_{H_2} = b_H.$ Let $H_k\in \A \backslash \A_p,\,$ and $X_1= H_1 \cap H_k\,,\,\,\,X_2= H_2 \cap H_k\,,\,$ the corresponding intersections with $H_1$ and $H_2.$ We consider several cases.
\begin{itemize}
\item If $|\A_{X_1}|=2$ and $|\A_{X_2}|=2,\,$ then  ${\omega_1}_{X_1} \wedge b_{X_1}=0\,$ and $\,{\omega_1}_{X_2} \wedge b_{X_2}=0 $\\$\Leftrightarrow b_{H_1}=b_{H_k}$ and $b_{H_2}=b_{H_k}.$ As $b_{H_k}= b_{H}$ we obtain $b_{H_1} = b_{H_2} = b_H.$
\item If  $|\A_{X_1}|=3$ and $|\A_{X_2}|=2,\,$ then $\A_{X_1}=\{H_1,H_k,H_j\},\,\,$ with $H_j \neq H, H_2$ (If $H_j = H$ or $H_2\,,$ then $X_1=p$ and $H_k \in \A_p$), and $\A_{X_2}=\{H_2,H_k\}.$\\
By taking $\{a_{H_1}a_{H_k} , a_{H_1}a_{H_j}\}$ as base of $A^2_{\mathbb{\Z}_3}(\A)$ whe have that\\
${\omega_1}_{X_1} \wedge b_{X_1}=0 \Leftrightarrow a_{H_1}a_{H_k}(2b_{H_k}-b_{H_1}-b_{H_j}) + a_{H_1}a_{H_j}(2b_{H_j}-b_{H_1}-b_{H_k})=0$ $\Leftrightarrow b_{H_1} + b_{H_k}+b_{H_j}=0\,\,(*)\,\,$ (in $\mathbb{Z}_3$). With $b_{H_k}= b_H$ and $b_{H_j}= b_H,\,$ we obtain $(*) \Leftrightarrow b_{H_1}=b_H.$ We also have $ {\omega_1}_{X_2} \wedge b_{X_2}=0 \Leftrightarrow b_{H_2}=b_{H_k}.$ Finally $b_{H_1} = b_{H_2} = b_H.$
\item If  $|\A_{X_1}|=3$ and $|\A_{X_2}|=3,\,$ then $\A_{X_1}=\{H_1,H_k,H_j\},\,\,$ with $H_j \neq H, H_2$\\ and  $\A_{X_2}=\{H_2,H_k,H_l\},\,\,$ with $H_l \neq H, H_1.$ With the same considerations as in the previous case we have ${\omega_1}_{X_1} \wedge b_{X_1}=0 \Leftrightarrow  b_{H_1} + b_{H_k}+b_{H_j}=0\,\,(*)$ and ${\omega_1}_{X_2} \wedge b_{X_2}=0 \Leftrightarrow  b_{H_2} + b_{H_k}+b_{H_l}=0\,\,(**).$ With $b_{H_k}= b_{H_j}= b_{H_l}= b_H,\,$ we obtain $(*) \Leftrightarrow b_{H_1}=b_H\,$ and $(**) \Leftrightarrow b_{H_2}=b_H.$ Finally $b_{H_1} = b_{H_2} = b_H.$
\end{itemize}
Hence $b$ and $\omega_1$ are proportional and $H^1(A^*_{\Z_3}(\A), \omega_1 \wedge)=0.$\\
On the other hand, Theorem C of \cite{PS}, with our Lemmas \ref{lem1} and \ref{lem2} and Remark \ref{rk1} for $R= \Z_3$ gives: $ord(\lambda^k)= 3 \Rightarrow H^1(F_{\A})_{\lambda^k}=0.$ 
\end{proof}

The following examples show the difficulty of the problem in the  general case.
First we give an example where $G(\A)$ is not connected, showing that the conditions (ii) and (iii) in Theorem \ref{pg1} are not sufficient.

\begin{ex}\label{ex1}
Let $\A'\subset \PP^2$ be the arrangement defined by the homogeneous polynomial $Q(x:y:z)= xyz(x^4-y^4)(y^4-z^4)(x^4-z^4).$ Lines of $\A'$ are: $\{x=0\},\{y=0\},$\\$\{z=0\},\,d_1,d_2,d_3,d_4,d_5,d_6,d_7,d_8,d_9,d_{10},d_{11},d_{12},\,$ where $d_1,d_2,d_3,d_4$ are of the form $x= \alpha y,\,$  $d_5,d_6,d_7,d_8$ are of the form $y= \alpha z,\,$ and  $d_9,d_{10},d_{11},d_{12}$ are of the form $x= \alpha z,\,$ with $\alpha^4=1.$ The intersection between $d_1,d_2,d_3,d_4$ with $\{z=0\}$ is a point of multiplicity 2, and it is the same for $d_5,d_6,d_7,d_8$ with $\{x=0\},\,$ and for $d_9,d_{10},d_{11},d_{12}$ with $\{y=0\}.$ The other intersection lines of $\A'$ are points of multiplicity 3 or 6. Indeed, if we take $i \in \{1,2,3,4\}$ and $j \in \{5,6,7,8\},$ we have  $d_i \cap d_j:=\{ x= \alpha_i y\} \cap \{ y= \alpha_j z\}$ with $(\alpha_i \alpha_j)^4=1.$ So, $d_i \cap d_j= d_i \cap d_j \cap d_k$ where $d_k:=\{x=\alpha_i \alpha_j z\},\,k \in \{9,10,11,12\}.$ Similary we have a point of multiplicity 3 if we take $i\in\{1,2,3,4\}$ and $j \in \{9,10,11,12\},$ and if we take $i\in\{5,6,7,8\}$ and $j \in \{9,10,11,12\}.$ Then we have tree points of multiplicity 6: \\$d_1 \cap d_2 \cap d_3 \cap d_4 \cap \{x=0\} \cap \{y=0\},\, d_5\cap d_6 \cap d_7 \cap d_8 \cap \{y=0\} \cap \{z=0\},\, $ and $d_9\cap d_{10} \cap d_{11} \cap d_{12} \cap\{x=0\} \cap \{z=0\}.$ Hence $G(\A)$ has three connected components and is not connected.  It's clear that $\A$ verifies points (ii) and (iii) of Theorem \ref{pg1}, and we have that $H^1(F_{\A},\C) \neq H^1(F_{\A})_1$ , see  Remark 3.4 (iii) of \cite{DP}.

\end{ex}

 When the assumptions of Theorem \ref{pg1} are not verified, it's very complicated to conclude and we have to use other  results.\\

\begin{ex}\label{ex2}
Let $\A'\subset \PP^2$ be the arrangement defined by the homogeneous polynomial $Q(x:y:z)= xyz(x^2-y^2)(y^2-z^2)(x^2-z^2).$ With the same arguments as in Example \ref{ex1} we can show that $G(\A)$ is not connected and $\A$ verifies points (ii) and (iii) of Theorem \ref{pg1}. But here we have that $H^1(F_{\A},\C) = H^1(F_{\A})_1$, see \cite{CS}  or Remark 3.4 (ii) of \cite{DP}.
\end{ex}

\begin{ex}\label{ex3}
Let  $\A'\subset \PP^2$ be the arrangement defined by the homogeneous polynomial $Q(x:y:z)= xy(x+y)(x-y)(x+2y)(x-2y)(2x+y+z)(2x+y+2z)(2x+y+3z)(2x+y-z)(2x+y-2z)(2x+y-3z).$ Here $d=12$ and we have two intersections in $L_2(\A)$ of multiplicity 6: $\{x=y=0\},\,$ and $\{y=-2x\} \cap \{z=0\}.$ One can easily verify that each hyperplane contains  one of these two intersections and that $G(\A)$ is connected. Indeed, any hyperplane in $\{\{x=0\},\{y=0\},\{x+y=0\},\{x-y=0\},\{x+2y=0\},\{x-2y=0\}\}$ is linked by an edge with any hyperplane in  $\{ \{2x+y+z=0\},\{2x+y+2z=0\},\{2x+y+3z=0\},\{2x+y-z=0\},\{2x+y-2z=0\},\{2x+y-3z=0\}\}.$ Hence (i) and (ii) of Theorem \ref{pg1} are verified, but not (iii).\\ The minimal number of lines in $\A'$ containing all the points of multiplicity at least 3 is 2, so with Theorem 1.1 of \cite{SHA} we have that $\A'$ belongs to the class $\mathcal{C}_2,$ and any rank one local system on $M(\A')$ is admissible. Hence if we take $\lambda^k\neq 1,\,$ there exists 
$$\omega= \sum_{H \in \A'} \omega_H \frac{dl_H}{l_H} \in H^1(M(\A'),\C),\,$$
 (where $l_H$ is the linear form defining the hyperplane  $H$) such that 
$$\dim H^1(F_{\A})_{\lambda^k}=\dim H^1(M(\A'),\LL_{\lambda^k})= \dim H^1(H^*(M(\A'),\C),\omega\wedge).$$
 Furthermore it is known that $exp(2\pi \sqrt{-1}\,\omega_H)=\lambda^k \,\,\forall H,\,$ so $\omega_H\neq 0 \,\,\forall H.$\\
Assume  $\dim H^1(H^*(M(\A'),\C),\omega\wedge)\neq0.$ Then $\omega\in\mathcal{R}_1(\A')=\{\alpha \in  H^1(M(\A'),\C)|$\\$\dim H^1(H^*(M(\A'),\C),\alpha\wedge)\geq1\}.$ With the description of the irreductible components of the first resonance variety for a $\mathcal{C}_2$ arrangement, (see Theorem 4.3 of \cite{Dinh}) we have a contradiction with the fact that $\omega_H\neq 0 \,\,\forall H.$ \\Hence $H^1(H^*(M(\A'),\C),\omega\wedge)=0,$ and we have that $H^1(F_{\A},\C)=H^1(F_{\A})_1.$

\end{ex}

\begin{ex}\label{ex4}
Let  $\A\subset \C^4$ be the arrangement defined by the homogeneous polynomial $Q(x,y,z,t)= xy(x-y)(x+y)(x-2y)(x+2y)zt(z-t)(z+t)(z-2t)(z+2t).$ Here $d=12$ and we have two intersections in $L_2(\A)$ of multiplicity 6: $\{x=y=0\},\,$ and $\{z=t=0\}.$ One can easily verify that each hyperplane contains  one of these two intersections and that $G(\A)$ is connected. Indeed, any hyperplane in $\{ \{x=0\},\{y=0\},\{x-y=0\},\{x+y=0\},\{x-2y=0\},\{x+2y=0\}\}$ is linked by an edge with any hyperplane in  $\{ \{z=0\},\{t=0\},\{z-t=0\},\{z+t=0\},\{z-2t=0\},\{z+2t=0\}\}.$ Hence (i) and (ii) of Theorem \ref{pg1} are verified, but not (iii).\\
One can decompose $\A$ in two arrangements with distinct variables: $\A=\A_1 \times \A_2,\,$ where $\A_1$ is defined by $Q_1(x,y)= xy(x-y)(x+y)(x-2y)(x+2y),\,$ and $\A_2$ is defined by $Q_2(z,t)= zt(z-t)(z+t)(z-2t)(z+2t).$ Let us take $\lambda^k\neq 1$ and note $F_1$ and $F_2$ the Milnor fibers of the subarrangements $\A_1$ and $\A_2$ in $\C^2$. Then applying Theorem 1.4 (i) of \cite{DNA} we have that $H^1(F_{\A})_{\lambda^k}= (H^*(\mathbb{T},\C) \oplus H^*(F_1)_{\lambda^k} \oplus H^*(F_2)_{\lambda^k})^1=0 ,$ so $H^1(F_{\A},\C)=H^1(F_{\A})_1.$
\end{ex}

In fact, a more general version of Corollary \ref{expg2} holds.\\
Let $\Gamma$ be a simple graph (that is to say it contains no loop and no double edge) and $\A_{\Gamma}$ be the corresponding graphic arrangement, see for instance \cite{PM}. In other words, $\A_{\Gamma}$ is a subarrangement of the braid arrangement.
Such a graph $\Gamma$ is composed by edges $(ij),\,i< j,\,$ and the corresponding arrangement $\A_{\Gamma}$ is composed by the hyperplanes $H_{ij}: x_i-x_j=0,\,\,(ij)\in \Gamma.$ We will note $|\Gamma|$ the number of vertices of $\Gamma.$ We say that $\Gamma$ is connected if we can link two different vertices with an edge sequence.
Let $\omega_1= \sum_{(ij)\in \Gamma} a_{ij} \in A^1_R(\A_{\Gamma}).$

 We have the following result:

\begin{lem}\label{lemgraph}
Suppose $\Gamma$ is connected and $|\Gamma|\geq 5,$ then $H^1(A^*_R(\A_{\Gamma}), \omega_1 \wedge) =0\,\,$ for any unitary commutative ring $R.$
\end{lem}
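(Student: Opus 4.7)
My plan is to adapt the argument of Lemma \ref{lem1}, incorporating the additional triangle relations coming from codimension-two intersections of three hyperplanes, which are unavoidable as soon as $\Gamma$ contains triangles. Write $b=\sum_{(ij)\in\Gamma} b_{ij}a_{ij}$ and analyze $\omega_1\wedge b=0$ via the Brieskorn decomposition $A^2_R(\A_\Gamma)=\bigoplus_{X\in L_2(\A_\Gamma)} A^2_R((\A_\Gamma)_X)$. For each $X\in L_2(\A_\Gamma)$ with $|(\A_\Gamma)_X|=2$, the computation of Lemma \ref{lem1} still applies and gives $b_{e_1}=b_{e_2}$, where $e_1,e_2\in\Gamma$ are the two edges whose hyperplanes meet at $X$; equivalently, $b$ must be constant on every connected component of the graph $G(\A_\Gamma)$. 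For $X$ corresponding to a triangle $(i,j,k)$ of $\Gamma$, so $|(\A_\Gamma)_X|=3$, expansion using the Orlik-Solomon relation $a_{ij}a_{ik}-a_{ij}a_{jk}+a_{ik}a_{jk}=0$ turns $\omega_{1,X}\wedge b_X=0$ into the symmetric system
$$2b_{ij}=b_{ik}+b_{jk},\qquad 2b_{ik}=b_{ij}+b_{jk},\qquad 2b_{jk}=b_{ij}+b_{ik},$$
whose crucial property is that as soon as two of $b_{ij},b_{ik},b_{jk}$ coincide, the third equals them as well---a conclusion requiring no division and hence valid over any unitary commutative ring $R$.

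The proof then reduces to a combinatorial question: show that every edge of $\Gamma$ is linked to a fixed reference edge through a sequence of these two rules. If $G(\A_\Gamma)$ is itself connected, we conclude exactly as in Lemma \ref{lem1}. Otherwise, my plan is to classify the obstruction. Observe that an edge $(j,l)\in\Gamma$ is isolated in $G(\A_\Gamma)$ if and only if (i) every edge of $\Gamma$ contains a vertex in $\{j,l\}$ and (ii) $(j,v)\in\Gamma\Leftrightarrow (l,v)\in\Gamma$ for every $v\notin\{j,l\}$. Combined with $\Gamma$ connected and $|\Gamma|\geq 5$, these conditions force $\Gamma$ to be the \emph{book graph} with spine $(j,l)$ and $m\geq 3$ pages---that is, $m$ pairwise non-adjacent vertices each joined to both $j$ and $l$---and one verifies that $G(\A_\Gamma)$ then has exactly two components: the singleton $\{H_{jl}\}$ and a component containing every other edge. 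On the latter, $b$ is forced to some constant $c$, and applying the triangle relation to $(j,l,v)$ for any page vertex $v$ gives $2b_{jv}=b_{jl}+b_{lv}$, i.e.\ $2c=b_{jl}+c$, whence $b_{jl}=c$. Hence $b=c\omega_1$ and $H^1(A^*_R(\A_\Gamma),\omega_1\wedge)=0$.

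I expect the main obstacle to be this combinatorial classification, and most of all the sub-claim that, under the hypotheses, $G(\A_\Gamma)$ cannot have two non-singleton connected components. The argument proceeds by assuming two unlinked edges $e_1=(a,b)\in C_1$ and $e_2=(b,d)\in C_2$, noting that they must share a vertex and complete a triangle $(a,b,d)\subset\Gamma$, and then tracing how any further edge $e_1'$ linked to $e_1$ inside $C_1$ forces rigid constraints on the $\Gamma$-neighborhoods of $a,b,d$. The hypotheses $|\Gamma|\geq 5$ and $\Gamma$ connected then rule out every consistent configuration except the book, collapsing one of $C_1,C_2$ to a singleton. This case analysis is the delicate part of the argument; everything else is either direct computation in the Orlik-Solomon algebra or a verbatim reuse of Lemma \ref{lem1}.
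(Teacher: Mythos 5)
Your proposal is correct in all its claims, but it organizes the argument quite differently from the paper. The paper works \emph{locally}: it fixes a triangle $(i,j,k)$ of $\Gamma$, invokes Lemma 3.3 of \cite{Lib-Yuz} (or Lemma 4.9 of \cite{PM}) to split into the cases $3\neq 0_R$ and $3=0_R$, and in the latter case uses the two extra vertices guaranteed by $|\Gamma|\geq 5$ to run a case-by-case analysis on how the additional edges attach to the triangle, each case producing either a $G(\A_\Gamma)$-path among $H_{ij},H_{ik},H_{jk}$ or two equal coefficients to feed into $b_{ij}+b_{ik}+b_{jk}=0$. You instead work \emph{globally}: you characterize when an edge is isolated in $G(\A_\Gamma)$, show that under the hypotheses this forces $\Gamma$ to be a book graph, prove that $G(\A_\Gamma)$ cannot have two non-singleton components, and then dispatch the single exceptional configuration with one triangle relation. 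Your route buys two things: a structural statement of independent interest (disconnectedness of $G(\A_\Gamma)$ pins down $\Gamma$ completely), and a cleaner treatment of the coefficient ring --- your symmetric system $2b_{ij}=b_{ik}+b_{jk}$, etc., propagates equality with no division and no dichotomy on $3$, whereas the paper's ``$b_{ij}=b_{ik}=b_{jk}$ if $3\neq 0_R$'' is really only justified when $3$ is not a zero divisor. The paper's route is more elementary in that it never needs a classification theorem, only a finite check. The one soft spot in your write-up is that the combinatorial core --- that two non-singleton components of $G(\A_\Gamma)$ are impossible, and that an isolated edge forces the book graph --- is asserted with a sketch rather than carried out; both claims are true (a cross-intersecting-families analysis of the edge bipartition of $\Gamma$ induced by a disconnection of $G(\A_\Gamma)$ does the job, using $|\Gamma|\geq 5$ and connectedness of $\Gamma$ to kill the small configurations), but this case analysis must actually be written out for the proof to be complete.
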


\proof Let $b=\sum_{(ij)\in \Gamma}b_{ij} a_{ij} \in A^1_R(\A_{\Gamma})$ such that $\omega_1 \wedge b=0.$ Let us show that $b$ and $\omega_1$ are proportional. With Lemma 3.3 of  \cite{Lib-Yuz} or Lemma 4.9 of \cite{PM} we have:\\
\noindent(i) For all intersection of type 1: $H_{ij} \cap H_{kl},\,i<j<k<l,\,$ we have $b_{ij}=b_{kl}.$\\
\noindent(i) For all intersection of type 2: $ H_{ij} \cap H_{ik} \cap H_{jk},\,i<j<k,\,$ we have either:\\ $b_{ij}=b_{ik}=b_{jk}$ if $3 \neq 0_R,\,$ or $b_{ij}+b_{ik}+b_{jk}=0$ if $3 = 0_R.$\\
So let us suppose that $3=0_R,\,$ and let us take an intersection of type 2: $H_{ij} \cap H_{ik} \cap H_{jk},\,i<j<k.$ We will show that $b_{ij}=b_{ik}=b_{jk}.$\\
Because  $|\A_{\Gamma}|\geq 5,$ there exists two additional vertices $s$ and $m,$ so there exists two additional edges and because $\Gamma$ is connected, these two edges are linked either: with two different vertices of the triangle $ijk,\,$ or with one of the vertices of the triangle $ijk,\,$ or one of these edges is linked with a vertice of the triangle $ijk\,$ and the other is linked with the new vertice of the first one. By symmetry, we can assume we are in one of the following cases  (here we have chosen $i<j<k<s<m$ but of course the order does not matter):
\begin{enumerate}
\item If $(is),\,(im)\in \Gamma:$  
\begin{enumerate}
\item If $(js),\,(km)\notin \Gamma,\,$ then $H_{ij}-H_{is}-H_{jk}-H_{im}-H_{ik},\,$ and $b_{ij}=b_{ik}=b_{jk}.$
\item If $(js),\,(km)\in \Gamma,\,$ then $H_{ij}-H_{km}-H_{js}-H_{ik}-H_{js}-H_{im}-H_{jk},\,$ and $b_{ij}=b_{ik}=b_{jk}.$
\item If $(js)\in \Gamma,\,(km)\notin \Gamma,\,$ here the graph $G(\A_{\Gamma})$ is not necessarily connected (for example it is not connected if $(jm) \in \Gamma$). But we have \\ $H_{ik}-H_{im}-H_{jk},\,$ so $b_{ik}=b_{jk}$ and  with $b_{ij}+b_{ik}+b_{jk}=0$ we have $b_{ij}=-2b_{ik}=b_{ik}\in R.$
\item If $(js)\notin \Gamma,\,(km)\in \Gamma,\,$ it's the symetric case of the previous.
\end{enumerate}
\item If $(is),\,(jm)\in \Gamma,\,$ the graph $G(\A_{\Gamma})$ is not necessarily connected. We have $H_{ik}-H_{jm}-H_{is}-H_{jk},$ so $b_{ik}=b_{jk}$ and $b_{ij}+b_{ik}+b_{jk}=0$ implies $b_{ij}=b_{ik}.$
\item If $(is),\,(sm)\in \Gamma,\,$ then $H_{ij},H_{ik}$ and $H_{jk}$ are linked with $H_{sm}$ in $G(\A_{\Gamma})$ and we conclude directly.
\end{enumerate}

\endproof

\begin{rk}\label{rkgraph}
\noindent(i) We have that $\Gamma$ is connected and $|\Gamma|\geq 5$ does not imply $G(\A_{\Gamma})$ is connected:
 the graphic arrangement $\A_{\Gamma}=\{H_{12},H_{13},H_{14},H_{15},H_{23},H_{34},H_{35}\}$ verifies $\Gamma$ is connected and $|\Gamma|\geq 5,$ but $G(\A_{\Gamma})$ is not connected ($H_{13}$ is linked with any hyperplane of $\A_{\Gamma})$ and we can't apply our Lemma \ref{lem1}.\\
\noindent(ii) If  $\Gamma$ is connected and $|\Gamma|\geq 5,$ then we recover A. M\u{a}cini and S. Papadima results for graphic arrangements \cite{PM}. Indeed, with similar considerations as in the proof of Theorem \ref{pg1} and by using Lemma \ref{lemgraph} instead of Lemma \ref{lem1}, we have that $H^1(F_{\A_{\Gamma}},\C)=H^1(F_{\A_{\Gamma}})_1.$\\
If the graph  $\Gamma$ is not connected, but each of its connected components $\Gamma_i$ satisfies $|\Gamma_i|\geq 5,$ then $\A_{\Gamma}$ is a product of arrangements $\A_{\Gamma_i}$ and we can conclude using Theorem 1.4 (i) of \cite{DNA}.
\end{rk}

\end{document}